\newtheorem{thm}{Theorem}[section] 
\newtheorem{theorem}[thm]{Theorem}
\newtheorem{lemma}[thm]{Lemma}
\newtheorem{corollary}[thm]{Corollary}
\newtheorem{definition}[thm]{Definition}
\newtheorem{prop}[thm]{Proposition} 
\theoremstyle{plain}
\newtheorem{example}[thm]{Example}
\newtheorem*{example*}{Example}
\DeclareMathOperator{\Hom}{\sf Hom}
\DeclareMathOperator{\m}{\sf Mod}
\def\cA{\mathcal A}\def\cB{\mathcal B}\def\cC{\mathcal C}
\def\cH{\mathcal H}
\def\cQ{\mathcal Q}
\def\cZ{\mathcal Z}
\def\RR{\mathbb R}
\def\ZZ{\mathbb Z}
\DeclareMathOperator{\id}{{\sf id}}
\newcommand{\Ring}{\mathsf{Ring}}
\def\bb1{\mathbb 1}
\DeclareMathOperator{\Mon}{\sf Mon}
\begin{document}

\title{Lax Functors, Cospans, and the Center Construction}

\author{Ryan Grady
\and Garrett Oren}

\address{Department of Mathematical Sciences\\Montana State University\\Bozeman, MT 59717}
\email{ryan.grady1@montana.edu}

\address{Department of Mathematical Sciences\\Montana State University\\Bozeman, MT 59717}
\curraddr{Mathematics \& Statistics\\Boston University\\Boston, MA 02215}
\email{goren@bu.edu}

\begin{abstract}
The center construction is not (classically) functorial. In this note, we specialize a universal construction of Jacob Lurie to the category of rings and upgrade the classical center to a lax functor.  In particular, we find  lax functors to the \textit{Morita bicategory} and  the \textit{bicategory of cospans}. 
\end{abstract}

\keywords{Centralizer, Functoriality, Morita Category, Cospans}
\subjclass[2010]{Primary 18D10. Secondary 16D20, 18B10, 18D05. }

\maketitle

\tableofcontents

\section{Introduction}

Functoriality (compatibility with composition) is a central notion in modern mathematics.  Yet, as we recall in a bit, passing from a ring to its center does not define a functor from the category of rings to itself. In this note, we present a streamlined approach to upgrading the center construction to a functor. The key ideas are:
\begin{enumerate}
\item Define the center of a ring in terms of a \emph{universal property};
\item Utilize \emph{bicategories} to relax the notion of functoriality. Functoriality (or the correction thereof) will then be encoded by additional coherence data.
\item Change the codomain category to the \emph{Morita bicategory} or the \emph{bicategory of cospans};

\end{enumerate}

None of the technical mathematics in this note is original, though we hope our presentation is novel and illuminating. The description of the center construction in terms of a universal property is drawn from Lurie's \emph{Higher Algebra} \cite{Lurie}; Lurie works in the language of stable $\infty$-categories while our description is 1-categorical. Our description of functoriality is 2-categorical and we choose to work in bicategories for this purpose.

The question of functoriality of the center construction has also been extensively studied from the perspective of Rational Conformal Field Theory and Topological Field Theory over the last 20+ years. The results we present could also be extracted from this literature by a careful reader, though again we hope that our presentation is more classical and palatable to a more junior researcher. For further reading on the relationship to field theory, we suggest the work of Davydov and collaborators \cite{Davydov}, \cite{DKR1}, \cite{DKR2}.

\begin{example*}
To see the lack of functoriality of the classical center for rings, we will bootstrap an example from groups. Let 
\[
D_6 = \langle s,r | s^2 = r^3 = e, srs=r^{-1} \rangle
\]
be the dihedral group of order 6. Consider $\ZZ/2$ written additively and the group homomorphisms
\[
\varphi \colon \ZZ/2 \to D_6; \quad 1 \mapsto s, \quad \text{ and } \quad \psi \colon D_6 \to \ZZ/2; \quad s \mapsto 1, \; r \mapsto 0.
\]
We then have the $\ZZ$-linear extensions of the maps at the level of integral group rings
\[
\widetilde{\varphi} \colon \ZZ[\ZZ/2] \to \ZZ[D_6] \quad \text{ and } \quad \widetilde{\psi} \colon \ZZ[D_6] \to \ZZ[\ZZ/2] .
\]
Note that by construction $\widetilde{\psi} \circ \widetilde {\varphi} \colon \ZZ[\ZZ/2] \to \ZZ[\ZZ/2]$ is an isomorphism. Now, $\ZZ[\ZZ/2]$ is commutative, while the center of $\ZZ[D_6] \cong \ZZ$. Hence, the induced map
\[
\widetilde{\psi}_\ast \colon Z(\ZZ[D_6])\cong \ZZ \to \ZZ[\ZZ/2] \cong Z(\ZZ[\ZZ/2])
\]
is not surjective, so neither is the composition $\widetilde{\psi}_\ast \circ \widetilde{\varphi}_\ast \colon \ZZ[\ZZ/2] \to \ZZ[\ZZ/2]$. However, as we have already observed, $\left ( \widetilde{\psi} \circ \widetilde{\varphi} \right )_\ast \colon \ZZ[\ZZ/2] \to \ZZ[\ZZ/2]$ is an isomorphism!
\end{example*}

In the present note, we begin by recalling some basics of monoidal categories, bicategories, and functors between them. In Section \ref{cent} we define centers and centralizers via a universal construction. Section \ref{func1} contains the proof that the center defines a lax functor to the Morita bicategory. In Section \ref{func2} we recall the bicategory of cospans---which are a diagrammatic generalization of bimodules---and prove that the center defines a lax functor from rings to cospans. Section \ref{tft} concludes by outlining the relevance of the centralizer construction to topological field theories.

\subsection*{Acknowledgements} The authors are grateful to David Ayala for his feedback and suggestions. We thank Ingo Runkel for comments on a previous version and for explicit comparison to \cite{DKR1}. Finally, the authors thank the anonymous referee for useful feedback which has greatly improved the readability of the article.

\section{Preliminaries}

We now provide (a terse) recollection of some categorical notions we will need in the present article. A classic reference for this material is Maclane \cite{Maclane}; the more recent \cite{Riehl} is also excellent.

\begin{definition} A \emph{monoidal category} is consists of a category $\cC$ with
\begin{itemize}
\item A bifunctor $\otimes \colon \cC \times \cC \to \cC$;
\item A distinguished object, $\mathbbm{1} \in \cC$, the ``unit"; and 
\item Three (families of) natural isomorphisms:
\begin{itemize}
\item For each triple of objects $A,B,C \in \cC$, an ``associator" with components 
\[
\alpha_{A,B,C} \colon A \otimes (B \otimes C) \cong (A \otimes B) \otimes C ;
\] 
\item Left and right ``unitors" with components
\[
\lambda_A \colon \mathbbm{1} \otimes A \cong A \quad \text{ and } \quad \rho_A \colon A \otimes \mathbbm{1} \cong A,
\]
for each object $A \in \cC$.
\end{itemize}
\end{itemize}
The associator and unitors satsify certain coherence conditions.
\begin{itemize}
\item For each triple, the following diagram commutes.
\begin{center}
\begin{tikzcd}
A \otimes (\mathbbm{1} \otimes B) \arrow[rd,"\id_A \otimes \lambda_B"'] \arrow[rr,"\alpha_{A,\mathbbm{1},B}"] & & (A\otimes \mathbbm{1}) \otimes B \arrow[ld,"\rho_A \otimes \id_B"]\\ 
& A \otimes B
\end{tikzcd}
\end{center}
\item For each quadruple of objects, the components of the associator satisfies the commutative ``pentagon diagram."
\begin{small}
\begin{center}
\begin{tikzcd}[column sep=-2em]
&&(A \otimes B) \otimes (C \otimes D) \arrow[rrdd,"\alpha_{A\otimes B,C,D}"] &&\\
&\\
A \otimes (B \otimes (C \otimes D)) \arrow[rruu,"\alpha_{A,B,C\otimes D}"] \arrow[rdd,"\id_A \otimes \alpha_{B,C,D}"'] &&&& ((A \otimes B)\otimes C) \otimes D \\
&\\
& A \otimes ((B \otimes C) \otimes D) \arrow[rr,"\alpha_{A,B\otimes C, D}"'] && (A \otimes (B \otimes C)) \otimes D \arrow[ruu,"\alpha_{A,B,C} \otimes \id_D"']
\end{tikzcd}
\end{center}
\end{small}
\end{itemize}
\end{definition}

Given a monoidal category $(\cC, \otimes, \mathbbm{1})$, the monoid objects in $\cC$ themselves form a category, where  a monoid object is an object $M \in \cC$ equipped with a multiplication morphism $\mu \colon M \otimes M \to M$ and a unit morphism $\eta \colon \mathbbm{1} \to M$ such that $\mu$ is associative (up to the associator $\alpha$) and unital, i.e., 

\begin{center}
\begin{tikzcd}
\mathbbm{1} \otimes M \arrow[rd,"\lambda_M"'] \arrow[r,"\eta \otimes \id_M"] & M \otimes M \arrow[d,"\mu"] & M \otimes \mathbbm{1} \arrow[l,"\id_M \otimes \eta"'] \arrow[ld,"\rho_M"] \\
&M
\end{tikzcd}
\end{center}
is a commutative diagram.

Moreover, a monoidal category is \emph{symmetric} if for each pair of objects $A,B \in \cC$ there is an isomorphism $\tau_{A,B} \colon A \otimes B \cong B \otimes A$ which is natural in both $A$ and $B$ and which satisfies three additional coherence axioms:
\begin{itemize}
\item For each pair $A,B \in \cC$, $\tau_{A,B} \circ \tau_{B,A} = \id_{A \otimes B}$.
\item The diagram
\begin{center}
\begin{tikzcd}
A \otimes \mathbbm{1} \ar[d,swap,"\rho_A"] \ar[r,"\tau_{A,\mathbbm{1}}"] & \mathbbm{1} \otimes A \ar[d,"\lambda_A"] \\
A \ar[r,"\id_A"] & A
\end{tikzcd}
\end{center}
commutes for each object $A \in \cC$.
\item (The Hexagon Axiom.) For all objects $A,B,C \in \cC$ the following commutes.
\begin{center}
\begin{tikzcd}
&A \otimes (C \otimes B) \ar[rr,"\id_A \otimes \tau_{Z,Y}"]  &&A \otimes (B \otimes C) \ar[dr,"\alpha_{A,B,C}"]\\
(A \otimes C) \otimes B \ar[ur,"\alpha_{A,C,B}^{-1}"] \ar[dr,swap,"\tau_{A\otimes C,B}"]&&&&(A \otimes B) \otimes C \\
& B \otimes (A \otimes C) \ar[rr,"\alpha_{B,A,C}"]&& (B \otimes A) \otimes C \ar[ur,swap,"\tau_{B,A} \otimes \id_C"]
\end{tikzcd}
\end{center}
\end{itemize}

\begin{definition} A \emph{bicategory}, $\cB$, consists of the following data:
\begin{itemize}
\item A collection of objects, $\mathrm{obj}(\cB)$; 
\item For each pair of objects $X,Y \in \mathrm{obj}(\cB)$ a category $\cB(X,Y)$, whose objects are called 1-morphisms and whose morphisms are called 2-morphisms;
\item For each object $X$, a distinguished 1-morphism $\id_X \in \cB(X,X)$;
\item For each triple of objects $X,Y,Z$, a functor, ``composition," $\circ \colon \cB(Y,Z) \times \cB(X,Y) \to \cB(X,Z)$; 
\item For each triple of composable 1-morphisms a natural isomorphism 
\[
\alpha \colon (h \circ g) \circ f \overset{\sim \;}{\Rightarrow} h \circ (g \circ f);
\] 
\item For each pair of objects and 1-morphism $f \in \cB(X,Y)$,  natural (in $f$) isomorphisms
\[
r_{X,Y} \colon f \circ \id_X \overset{\sim \;}{\Rightarrow} f \quad \text{ and } \quad \ell_{X,Y} \colon \id_Y \circ f \overset{\sim \;}{\Rightarrow} f.
\]
\end{itemize}
The associator must satisfy a pentagon diagram as before. Similarly, the unitors $r$ and $\ell$ must satisfy the triangular diagrams from above.
\end{definition}

Note that any category can be promoted to a bicategory. Indeed, if $\cC$ is an ordinary category then we can consider it as a bicategory where for any pair of objects $X,Y \in \cC$, the category $\cC (X,Y)$ is discrete, i.e., it only has identity morphisms. For more on bicategories, see Chapters 2 and 4 of \cite{JY}.

In the sequel bicategories will arise as they are a setting which allows for more flexible notions of functoriality.

\begin{definition}\label{def:lax}
Let $\cA$ and $\cB$ be bicategories. A \emph{lax functor} $P \colon \cA \to \cB$ consists of
\begin{itemize}
\item A function $P \colon \mathrm{Obj}(\cA) \to  \mathrm{Obj}(\cB)$;
\item For each hom-category $\cA(X,Y)$ in $\cA$, a functor 
\[
P_{X,Y} \colon \cA(X,Y)\to \cB(P(X),P(Y));
\]
\item For each object $X\in \cA$ a 2-cell $P_{\id_X} \colon \id_{P(X)} \Rightarrow P_{X,X}(\id_X)$;
\item For each triple of objects and morphisms $f \colon X \to Y$ and $g \colon Y \to Z$ , a natural (in $f$ and $g$) transformation 
\[
P_{g,f} \colon P_{Y,Z} (g) \circ P_{X,Y}(f)  \Rightarrow   P_{X,Z}(g \circ f).
\] 
\end{itemize}
This data satisfies a sequence of coherence diagrams specifying unity and associativity. Let $f \in \cA(W,X)$, $g \in \cA (X,Y)$, and $h \in \cA (Y,Z)$. 
\begin{itemize}
\item (Lax Unity.) The following commute in $\cB (P(W), P(X))$.
\begin{center}
\begin{tiny}
\begin{tikzcd}
\id_{P(X)} \circ P_{W,X} (f) \ar[d,swap,"P_{\id_X} \ast \id_{P_{W,Z} (f)}"] \ar[r,"\ell_\cB"] & P_{W,X}(f) \\
P(\id_X) \circ P_{W,X} (f) \ar[r,"P_{\id_X, f}"] & P_{W,X} (\id_X \circ f) \ar[u,swap,"P_{W,X}(\ell_\cA)"]
\end{tikzcd}
\hspace{0.15in}
\begin{tikzcd}
P_{W,X} (f) \circ \id_{F(W)} \ar[r,"r_\cB"] \ar[d,swap,"\id_{P_{W,X}(f)} \ast P_{\id_X}"] & P_{W,X}(f) \\ P_{W,X}(f) \circ P_{W,W} (\id_W) \ar[r,"P_{f,\id_W}"] &P_{W,X} (f \circ \id_W) \ar[u,swap,"P_{W,X}(r_\cA)"]
\end{tikzcd}
\end{tiny}
\end{center}
\item (Lax Associativity.) The following commutes in $\cB (P(W), P(Z))$.
\begin{center}
\begin{tikzcd}
(P_{Y,Z} (h) \circ P_{X,Y} (g)) \circ P_{W,X}(f) \ar[r,"\alpha_\cB"] \ar[d,swap,"P_{h,g} \ast \id_{P_{W,Z}(f)}"] & P_{Y,Z}(h) \circ (P_{X,Y} (g) \circ P_{W,X} (f)) \ar[d,"\id_{P_{Y,Z} (h)} \ast P_{g,f}"] \\
P_{X,Z} (h \circ g) \circ P_{W,X} (f) \ar[d,swap,"P_{h\circ g,f}"] & P_{Y,Z} (h) \circ P_{W,Y} (g \circ f) \ar[d,"P_{h,g \circ f}"]\\
P_{W,Z} ((h \circ g) \circ f) \ar[r,"P_{W,Z} (\alpha_\cA)"] & P_{W,Z} (h \circ (g \circ f))
\end{tikzcd}
\end{center}
\end{itemize}
\end{definition}

%
%

\section{Centralizers}\label{cent}

Let $(\cC, \otimes, \mathbbm{1})$ be a symmetric monoidal category and $\Mon (\cC)$ the associated category of monoids.

\begin{definition}
The \emph{category of factorizations} for some category of monoids $\Mon(\cC)$ and a morphism $f\in \Hom_{\Mon(\cC)}(A,B)$ denoted $\mathsf{Factz}(\Mon (\cC),f)$ has as objects pairs $(C,g)$ where $C\in \Mon(\cC)$ and $g \colon A\otimes C \to B$ is a map of monoids and the following diagram commutes 

\begin{center}
\begin{tikzcd}[sep=large]
  & A\otimes C \arrow[rd, "g"]& \\
  A\arrow[ur, " \eta_c"]\arrow[rr,"f"]& & B,
  \end{tikzcd}
\end{center}
where $\eta_c:=(\id_A \otimes \eta) \circ \rho_{A}^{-1} $ for $\eta$ is the unit map for the monoid structure on $C$ and $\rho_{A}^{-1}$ is the inverse of the right unitor. 
 For two objects $(C,g),(D,h)$ then $k\in \Hom_{\mathsf{Factz}}((C,g),(D,h))$ is a morphism $k\in \Hom_{\Mon(\cC)}(C,D)$ such that the following diagram commutes.

\begin{center}
\begin{tikzcd}[sep=large]
 & A\otimes C \arrow[d,"\id_A \otimes k"] \arrow[rdd, bend left, "g"] & \\
 & A \otimes D \arrow[rd, "h"] & \\
 A \arrow[uur,bend left, "\eta_C"] \arrow[ur,"\eta_D"] \arrow [rr, "f"]& & B
\end{tikzcd}
\end{center}
Composition of morphisms in $\mathsf{Factz}(\Mon(\cC),f)$ is just regular composition in $\Mon(\cC)$. The identity morphism $\id_c\in \Hom((C,g),(C,g))$ is just $\id_C$ since we always have the  diagram

\begin{center}
\begin{tikzcd}[sep=large]
 & A\otimes C \arrow[d,"\id_A \otimes \id_C"] \arrow[rdd, bend left, "g"] & \\
 & A \otimes C \arrow[rd, "g"] & \\
 A \arrow[uur,bend left, "\eta_C"] \arrow[ur,"\eta_C"] \arrow [rr, "f"]& & B.
\end{tikzcd}
\end{center}
\end{definition}

\begin{definition}
Let $f \colon A \to B$ be a morphism in $\Mon (\cC)$, then the centralizer of $f$ is a final object in the category $\mathsf{Factz}(\Mon(\cC),f)$. For $A\in \Mon(\cC)$, the \emph{center} of $A$ is the centralizer of the identity morphism of $A$.
\end{definition}

As it satisfies a universal property, the centralizer is well-defined up to isomorphism (should it exist), so hence we refer to it as ``the" centralizer. (This is an important, yet standard, argument found in any of the references of the previous section \cite{JY}, \cite{Maclane}, \cite{Riehl}.)

As a consistency check, we verify that the preceding definition recovers the classical definition of the center of a ring. Let $\cC=\mathsf{Ab}$, the category of Abelian groups, so $\Mon (\cC)$ is identified with the category of unital rings, $\mathsf{Ring}$.

\begin{prop}\label{203}
If $f \colon R \to S$ is a ring homomorphism, then $Z(f)$ is given by
\[
Z(f)= \{s\in S | s f(r)=f(r) s \quad \forall r\in R\}
\] 
with the morphism $\mu_f \in \Hom_{\Ring} ( R\otimes Z(f), S)$ defined as follows 
\[
\mu_f=\mu \circ (f\otimes \id),
\] where $\mu$ is just multiplication in $S$.

\end{prop}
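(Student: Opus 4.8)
The plan is to exhibit $(Z(f), \mu_f)$ — with $Z(f)$ regarded as a subring of $S$ and $\mu_f = \mu \circ (f \otimes \id)$ — as a final object of $\mathsf{Factz}(\Ring, f)$, by first re-expressing the data of a factorization in concrete ring-theoretic terms and then recognizing the centralizer as the universal recipient of such data.

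First I would unwind the definition of an object $(C,g)$ using the universal property of the tensor product of rings. Since $\otimes = \otimes_{\ZZ}$ and the two canonical maps $R \to R \otimes C$, $r \mapsto r \otimes 1_C$, and $C \to R \otimes C$, $c \mapsto 1_R \otimes c$, have pointwise-commuting images in $R \otimes C$, a monoid homomorphism $g \colon R \otimes C \to S$ is the same datum as a pair of ring homomorphisms $g_R \colon R \to S$ and $g_C \colon C \to S$ whose images commute in $S$, via $g(r \otimes c) = g_R(r)\, g_C(c)$. The factorization triangle asserts exactly $g \circ \eta_c = f$, and since $\eta_c(r) = r \otimes 1_C$, this forces $g_R = f$. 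Hence an object of $\mathsf{Factz}(\Ring, f)$ is equivalent to a ring $C$ together with a ring homomorphism $g_C \colon C \to S$ whose image commutes with $f(R)$. Translating a morphism $k \colon (C,g) \to (D,h)$ in the same way, the commuting square $h \circ (\id_R \otimes k) = g$ becomes the single condition $g_C = h_C \circ k$.

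Next I would identify the proposed target. The set $Z(f)$ is the centralizer of the subset $f(R) \subseteq S$, hence a subring of $S$; let $\iota \colon Z(f) \hookrightarrow S$ denote the inclusion. Its image is precisely $Z(f)$, which commutes with $f(R)$ by construction, so $(Z(f), \iota)$ is a legitimate object, and the associated map $R \otimes Z(f) \to S$ sends $r \otimes z \mapsto f(r)\,z$, i.e. equals $\mu \circ (f \otimes \id) = \mu_f$, as claimed. For finality, given any object presented as $g_C \colon C \to S$ with $g_C(C)$ commuting with $f(R)$, the image $g_C(C)$ lies in $Z(f)$, so $g_C$ corestricts to a ring homomorphism $k \colon C \to Z(f)$ with $\iota \circ k = g_C$; in the translated language this is exactly a morphism $(C,g) \to (Z(f), \iota)$. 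Uniqueness is immediate since $\iota$ is injective, so any two morphisms lying over $g_C$ must coincide.

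The step I expect to require the most care is the first: making precise the universal property of $R \otimes_{\ZZ} C$ for possibly noncommutative rings — namely that ring maps out of the tensor product correspond bijectively to pairs of ring maps with commuting images — and verifying that the factorization condition pins the $R$-leg down to be exactly $f$ (rather than merely agreeing with $f$ up to the unit). Everything after that reduces to the elementary facts that the centralizer of a subring is a subring and that a subring inclusion is a monomorphism, which together yield existence and uniqueness of the comparison morphism.
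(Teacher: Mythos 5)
Your proof is correct, but it is organized differently from the paper's. The paper argues directly: given a competing object $(T,\tau)$, it defines $\tilde\tau(t)=\tau(1\otimes t)$, checks by hand on simple tensors that this lands in $Z(f)$ (using $\tau(r\otimes t)=\tau(r\otimes 1)\tau(1\otimes t)$ and $f(r)=\tau(r\otimes 1)$), that it is a ring homomorphism, that it makes the relevant triangles commute, and that it is unique by evaluating at $r=1$. You instead front-load a structural lemma --- that a unital ring homomorphism $R\otimes_{\ZZ} C\to S$ is the same datum as a pair of ring homomorphisms $R\to S$ and $C\to S$ with pointwise-commuting images, and that the factorization triangle pins the $R$-leg down to $f$ --- which converts $\mathsf{Factz}(\Ring,f)$ into the category of rings mapping to $S$ with image in the centralizer of $f(R)$. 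Finality of $(Z(f),\iota)$ then becomes the observation that a subring inclusion is terminal among maps factoring through it, with uniqueness free from injectivity of $\iota$. The two arguments contain the same underlying computation (your dictionary encapsulates exactly the identities the paper verifies on simple tensors), but your packaging makes the universal property transparent at the cost of having to state and prove the coproduct-style characterization of $\otimes_{\ZZ}$ for noncommutative unital rings; as you note, that is the one step requiring care, and it does hold for unital homomorphisms, so there is no gap. The paper's version is more elementary and self-contained. (Minor slip: in your translation of morphisms, $h_C$ should read $h_D$.)
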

\begin{proof}
With the definitions given above,  the  diagram 
\begin{center}
\begin{tikzcd}[sep=large]
  & R\otimes Z(f) \arrow[rd, "\mu_f"]& \\
  R\arrow[ur, " \eta"]\arrow[rr,"f"]& & S
  \end{tikzcd}
\end{center}
  commutes. Now to show that it satisfies the universal property we will let $(T,\tau)$ be any other object in $\mathsf{Factz} (\Ring,f)$, so a commutative diagram
\begin{center}
\begin{tikzcd}[sep=large]
  & R\otimes T \arrow[rd, "\tau"]& \\
  R\arrow[ur, " \eta"]\arrow[rr,"f"]& & S.
  \end{tikzcd}
\end{center}
We need a morphism $\tilde{\tau}\in \Hom_{\Ring}(T, Z(f))$ such that 
\begin{center}
\begin{tikzcd}[sep=large]
 & R\otimes T \arrow[d,"\id_A \otimes \tilde{\tau}"] \arrow[rdd, bend left, "\tau"] & \\
 & R \otimes Z(f) \arrow[rd, "\mu_f"] & \\
 R \arrow[uur,bend left, "\eta"] \arrow[ur,"\eta"] \arrow [rr, "f"]& & S
\end{tikzcd}
\end{center}
commutes. The morphism $\tilde{\tau}$ can be defined as  
\[
\tilde{\tau}(t)=\tau(1\otimes t).
\] 

We will now show that this morphism is well defined. We know by the properties of tensor products that 
\[
\tau(r\otimes t)=\tau(r\otimes 1)\tau(1\otimes t)=\tau(1\otimes t)\tau(r\otimes 1).
\]
By commutativity we know that 
\[
f(r)\tau(1\otimes t)= \tau(1\otimes t) f(r).
\]
Therefore, $\tau(1\otimes t)\in Z(f)$ so we have a well defined map. It is also a morphism in $\Ring$ since it respects addition and multiplication by construction and 
\[
\tilde{\tau}(1)=\tau(1\otimes 1)=1 \qquad \tilde{\tau}(0)=\tau(1\otimes 0)=0.
\]
This map also makes the diagram commute. The upper left triangle is immediate. The upper right triangle can be checked on simple tensors as follows 
\[
\mu_f \circ(\id_R \otimes \tilde{\tau})(r\otimes t)= \m_f(r\otimes \tau(1\otimes t))=f(r)\tau(1\otimes t).
\]
By commutativity of the triangle diagram for $(T, \tau)$, we know that $f(r)=\tau(r\otimes 1)$. Hence,
\[
f(r)\tau(1\otimes t)=\tau(r\otimes 1) \tau(1\otimes t)=\tau(r\otimes t)
\]
and $\tilde{\tau}$ makes the diagram commute.  Now all that is left to show is that it is unique. If we had some other $\sigma \in \Hom_{\Ring}(T, Z(f))$ such that the diagram commuted, then for the upper right triangle we would need 
\[
\mu_f \circ (\id_R\otimes \sigma)(r\otimes t)=\tau(r\otimes t)
\]
\[
f(r)\sigma(t)=\tau(r\otimes t).
\]
For $r=1$ we see that 
\[
f(1)\sigma(t)=\sigma(t)=\tau(1\otimes t),
\] 
which shows that $\sigma=\tilde{\tau}$. So $\tilde{\tau}$ is a unique map. 
\end{proof}

\begin{corollary}
Let $R \in \mathsf{Ring}$,  (the underlying object of) the centralizer of identity morphism of $R$ is the classical center of the ring $R$.
\end{corollary}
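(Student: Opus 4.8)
The plan is to obtain this statement as the special case $f = \id_R$ of Proposition \ref{203}. By definition, the center of $R$ is the centralizer of the identity morphism $\id_R \colon R \to R$, and Proposition \ref{203} already computes the centralizer of an \emph{arbitrary} ring homomorphism. So the entire content of the corollary is a substitution: I would set $S = R$ and $f = \id_R$ in the explicit description proved there, and then recognize the resulting set as the classical center.

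Carrying out the substitution, the formula
\[
Z(f) = \{\, s \in S \mid s\, f(r) = f(r)\, s \text{ for all } r \in R \,\}
\]
becomes, upon replacing $f(r)$ by $r$,
\[
Z(\id_R) = \{\, s \in R \mid s\, r = r\, s \text{ for all } r \in R \,\}.
\]
Equipped with the addition and multiplication it inherits as a subring of $R$, this is exactly the classical center $Z(R)$. I would also note in passing that the accompanying structure map $\mu_{\id_R} = \mu \circ (\id_R \otimes \id)$ reduces to the multiplication $R \otimes Z(R) \to R$, though this is not needed for the claim about the underlying object.

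The only point deserving a word of care is the parenthetical phrase ``the underlying object.'' The centralizer delivered by Proposition \ref{203} is a pair $(Z(\id_R), \mu_{\id_R})$ that is final in the factorization category $\mathsf{Factz}(\Ring, \id_R)$, and its underlying object is the ring $Z(\id_R)$ obtained by forgetting the structure map. I would therefore emphasize that the passage from the final factorization object to its underlying ring returns precisely the subring displayed above. Since everything follows from a direct specialization of an already-proved proposition, I do not anticipate any genuine obstacle here; the statement is a corollary in the literal sense, and the proof amounts to making the substitution $f = \id_R$ explicit.
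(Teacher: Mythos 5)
Your proposal is correct and follows exactly the paper's own argument: both specialize Proposition \ref{203} to $f = \id_R$, recognize the resulting set $\{\, s \in R \mid sr = rs \ \forall r \in R\,\}$ as the classical center, and note that the structure map reduces to multiplication. No issues.
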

\begin{proof}
By Proposition \ref{203} we know that 
\[
Z(\id_R)=\{r \in R| r \cdot \id_R(r')=\id_R(r')\cdot  r \quad \forall r' \in R\},
\] which is precisely the definition of the center of the ring $R$. The morphism $\mu \colon R\otimes Z(R) \to R$ is simply multiplication which  satisfies the triangle diagram.
\end{proof}

\begin{example}
While the preceding corollary makes it clear that $Z(R)$ is commutative, it is not true that $Z(f)$ must be commutative. Indeed, consider the homomorphism
\[
\varphi \colon \ZZ/2 \to \mathrm{Mat}_{2 \times 2} (\ZZ/2), \quad 1 \mapsto \begin{pmatrix}1&0\\0&1\end{pmatrix}.
\]
The centralizer of $\varphi$, $Z(\varphi)$, is the whole ring of matrices which is not commutative:
\[
\begin{pmatrix}1&1\\0&1\end{pmatrix} \begin{pmatrix}1&0\\1&1\end{pmatrix} = \begin{pmatrix}0&1\\1&1\end{pmatrix} \neq
\begin{pmatrix}1&1\\1&0\end{pmatrix} = \begin{pmatrix}1&0\\1&1\end{pmatrix} \begin{pmatrix}1&1\\0&1\end{pmatrix}.
\]
\end{example}

\section{Functoriality I: Morita Bicategory}\label{func1}

In the 1960's, when B\'{e}nabou  wrote down the definition of bicategories \cite{Benabou}, the \emph{Morita bicategory} was a motivating example.

\begin{definition}
The \emph{Morita bicategory}, $\mathsf{Morita}$, has as objects (unital) rings. For rings $R$ and $S$, $\mathsf{Morita} (R,S)$ is the category of $R$-$S$ bimodules and bimodule homorphisms (intertwiners). Composition in $\mathsf{Morita}$ is given by the tensor product of bimodules. For a ring $R$, the identity object is $R \in \mathsf{Morita}(R,R)$ equipped with its standard $R$-$R$ bimodule structure. The associator and unitors are induced by properties of the tensor product.
\end{definition}

It is a standard exercise that $\mathsf{Morita}$ actually defines a bicategory, i.e., the pentagon and triangle diagrams commute. (Alternatively, see Section XII.7 of \cite{Maclane}.) Moreover, the category $\mathsf{Ring}$ is equipped with a functor to $\mathsf{Morita}$, where a ring homomorphism $\varphi \colon R \to S$ is sent to the $R$-$S$ bimodule $S$ with $R$ acting via $\varphi$. The bicategory is named for Kiiti Morita whose work on equivalences of rings via their categories of modules \cite{Morita}---\emph{Morita equivalence}---can be interpreted as isomorphism in $\mathsf{Morita}$.

\begin{lemma}
Let $f \colon R \to S$ be a ring homomorphism. The centralizer $Z(f)$ has a natural $(Z(S), Z(R))$-bimodule structure.
\end{lemma}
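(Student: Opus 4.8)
The plan is to exhibit explicit left and right actions and verify they commute and respect the ring structures. Recall from Proposition \ref{203} that, for a ring homomorphism $f \colon R \to S$,
\[
Z(f) = \{ s \in S \mid s\,f(r) = f(r)\,s \quad \forall r \in R \},
\]
so $Z(f)$ is an additive subgroup of $S$ consisting precisely of those elements commuting with the image $f(R)$. The first step is to observe that $Z(S) = Z(\id_S)$ consists of elements commuting with \emph{all} of $S$, hence with every element of $Z(f) \subseteq S$; likewise $Z(R)$ commutes with all of $R$ and so, after applying $f$, the image $f(Z(R))$ lands in $S$ and commutes with $f(R)$. I would therefore define the left $Z(S)$-action by the multiplication $z \cdot s := z s$ in $S$ for $z \in Z(S)$, $s \in Z(f)$, and the right $Z(R)$-action by $s \cdot w := s\,f(w)$ for $w \in Z(R)$, using $f$ to transport $Z(R)$ into $S$.

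The key verification has two parts. First I would check that these actions actually land in $Z(f)$: given $s \in Z(f)$ and $z \in Z(S)$, the product $zs$ must still commute with every $f(r)$, which follows because $z$ is central in $S$ and $s \in Z(f)$, so $zs\,f(r) = z\,f(r)\,s = f(r)\,z s$. Symmetrically, $s\,f(w)$ commutes with $f(r)$ since $f(w) \in f(Z(R))$ commutes with $f(r)$ (as $w$ commutes with $r$ in $R$ and $f$ is a homomorphism) and $s \in Z(f)$; this is where I would take care that $f(Z(R))$ need not lie in $Z(S)$, but it does commute with the relevant subring $f(R)$, which is all that is required for the product to remain in $Z(f)$. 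Second, I would confirm the bimodule axioms: associativity and unitality of each action are inherited from associativity of multiplication in $S$ and the fact that $f(1_R) = 1_S$, while the crucial left-right compatibility $(z \cdot s) \cdot w = z \cdot (s \cdot w)$ reduces to the associativity identity $(zs)f(w) = z(s f(w))$ in $S$.

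The main obstacle, to the extent there is one, is the asymmetry in the two actions: the left factor $Z(S)$ acts by honest multiplication in $S$, whereas the right factor $Z(R)$ acts only after pushing forward along $f$. I would make sure the naturality/well-definedness of the right action genuinely uses that $w \in Z(R)$ rather than an arbitrary element of $R$ — an arbitrary $f(r)$ need not commute with a general $s \in Z(f)$ from the correct side, so centrality of $w$ in $R$ is essential to keep the product inside $Z(f)$. Once these closure conditions are in place, all remaining bimodule identities are formal consequences of the associative and unital ring structure on $S$ together with the homomorphism properties of $f$, so I would state them briefly rather than grinding through each diagram.
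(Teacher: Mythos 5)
Your proposal is correct and follows essentially the same route as the paper's proof: left action of $Z(S)$ by multiplication in $S$, right action of $Z(R)$ via $s \cdot w = s\,f(w)$, with the key point in each case being closure in $Z(f)$. Your explicit attention to the bimodule axioms (associativity, unitality, and the compatibility $(z\cdot s)\cdot w = z\cdot(s\cdot w)$) is a welcome addition that the paper leaves implicit, but it does not change the substance of the argument.
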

\begin{proof}
	We know that 
	
		\[
		Z(f)=\{ s\in S| sf(r)=f(r)s \mbox{ for any } r\in R\}.
		\]
We can act on some $s'\in Z(f)$ from the left with $s\in Z(S)$ through the multiplication from $S$ as $s\cdot s'=ss'$ where  
	\[
		ss'f(r)=sf(r)s'=f(r)ss'.
	\]
	(Since $s' \in Z(f)$ and $s\in Z(S)$ so  $ss'\in Z(f)$.)
	  We can act on some $s' \in Z(f)$ from the right with some $r\in Z(R)$ through the function $f$ as  
	\[
	s' \cdot r= s' f(r).
	\]
	  For  $r' \in R$
	\[
	s'f(r)f(r')=s'f(rr')=s'f(r'r)=s'f(r')f(r)=f(r')s'f(r)
	\] since $r\in Z(r)$. 
\end{proof}

\begin{lemma}\label{tensorcomp}
Given ring homomorphisms $f \colon R \to S$ and $g \colon S \to T$, the canonical map
\[
\mu \colon Z(g) \otimes_{Z(S)} Z(f) \to Z(g \circ f)
\]
is a map of $(Z(T), Z(R))$-bimodules.
\end{lemma}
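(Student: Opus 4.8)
The plan is to write down the canonical map explicitly and then verify, in turn, that it is well-defined, balanced over $Z(S)$, and bilinear for the two outer actions. First I would recall from the previous lemma the relevant module structures: as a right $Z(S)$-module, $Z(g)$ carries the action $t \cdot s = t\, g(s)$ for $s \in Z(S)$, while $Z(f)$ is a left $Z(S)$-module via multiplication in $S$, $s \cdot s' = s s'$. Since $Z(f) \subseteq S$ and $Z(g) \subseteq T$, and $g \colon S \to T$ is a ring map, I can form the product $t\, g(s) \in T$ for $t \in Z(g)$ and $s \in Z(f)$, and I define the canonical map on simple tensors by
\[
\mu(t \otimes s) = t \cdot g(s).
\]

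Next I would check that $\mu$ actually takes values in $Z(g \circ f)$. For $r \in R$, using that $s \in Z(f)$ commutes with $f(r)$ (hence $g(s)$ commutes with $g(f(r))$) and that $t \in Z(g)$ commutes with every element of $g(S)$, in particular with $g(f(r)) = (g \circ f)(r)$, one computes
\[
t\, g(s)\, (g\circ f)(r) = t\, g(f(r))\, g(s) = (g\circ f)(r)\, t\, g(s),
\]
so $t\, g(s) \in Z(g \circ f)$. I then verify that $\mu$ is balanced over $Z(S)$: for $s_0 \in Z(S)$ the two ways of moving $s_0$ across the tensor agree, since $\mu\big((t \cdot s_0)\otimes s\big) = t\, g(s_0)\, g(s) = t\, g(s_0 s) = \mu\big(t \otimes (s_0 \cdot s)\big)$, using that $g$ is multiplicative; additivity in each variable is immediate. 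This shows $\mu$ descends to a well-defined map out of the tensor product $Z(g)\otimes_{Z(S)} Z(f)$.

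Finally I would confirm bimodule linearity. Left $Z(T)$-linearity is immediate, since the left action on both sides is multiplication in $T$: $\mu\big((u\cdot t)\otimes s\big) = u\, t\, g(s) = u \cdot \mu(t \otimes s)$ for $u \in Z(T)$. For right $Z(R)$-linearity, the action on $Z(f)$ is $s \cdot r = s\, f(r)$ and on $Z(g\circ f)$ is $w \cdot r = w\,(g\circ f)(r)$, so $\mu\big(t \otimes (s \cdot r)\big) = t\, g(s f(r)) = t\, g(s)\, g(f(r)) = \mu(t\otimes s)\cdot r$, again by multiplicativity of $g$. I do not anticipate a genuine obstacle here: the content is entirely a matter of correctly transcribing the left/right actions supplied by the preceding lemma and then exploiting that $g$ is a ring homomorphism together with the two centralizing conditions. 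The only point requiring care is keeping straight which $Z(S)$-action is used where, so that the balancing computation and the definition of $\mu$ are mutually consistent.
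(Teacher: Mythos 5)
Your proposal is correct and follows essentially the same route as the paper: define $\mu(t\otimes s)=t\,g(s)$, use that $g$ is a ring homomorphism together with the two centralizing conditions to see the image lands in $Z(g\circ f)$, and then read off the bimodule linearity from the actions set up in the preceding lemma. You are in fact somewhat more careful than the paper, which omits the explicit check that $\mu$ is $Z(S)$-balanced and simply asserts bimodule linearity ``by construction''; your added verifications are welcome but do not change the argument.
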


\begin{proof}

For some $t\otimes s \in Z(g)\otimes_{Z(s)}Z(f)$ we have that
\[
Z(g)=\{t\in T| g(s)t=tg(s)\mbox{ for any } s\in S\},
\quad Z(f)=\{s\in S| f(t) s=sf(t) \mbox{ for any } t\in T\},
\]
\[
Z(g\circ f)=\{t\in T | g(f(r))t=tg(f(r)) \mbox{ for any } r\in R\}, \; \; \text{ and } \; \; 
\mu(t\otimes s)= t g(s).
\]
First we need to check that this is map is well defined. Let $t\in Z(g)$ so for any $r\in R$ 
\[
g(f(r)) tg(s)=t g(f(r)) g(s)
\]
and since $s\in Z(f)$ and $g$ is a ring homomorphism we know that 
\[
t g(f(r)) g(s)= t g(f(r)s)=tg(sf(r))=t g(s)g(f(r)),
\] 
so $tg(s) \in Z(g\circ f)$.  
By construction $\mu$ is a bimodule homomorphism. 

%
\end{proof}

\begin{theorem}\label{thm:morita}
The center defines a lax functor $Z: \mathsf{Ring} \to \mathsf{Morita}$.
\end{theorem}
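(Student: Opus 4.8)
The plan is to unwind Definition \ref{def:lax} for the assignment $Z$ one clause at a time, exploiting throughout the fact that $\mathsf{Ring}$, promoted to a bicategory, has \emph{discrete} hom-categories, so that every check involving $2$-cells in the source collapses to a statement about objects. First I would assemble the data. On objects, $Z$ sends a ring $R$ to its center $Z(\id_R)$, the classical center, which is a ring and hence an object of $\mathsf{Morita}$. On hom-categories, since $\mathsf{Ring}(R,S)$ is discrete a functor $Z_{R,S}\colon \mathsf{Ring}(R,S)\to \mathsf{Morita}(Z(R),Z(S))$ is determined by its values on objects: it sends a ring homomorphism $f\colon R\to S$ to the centralizer $Z(f)$, viewed as a $1$-morphism between the centers via the bimodule structure supplied by the first Lemma of this section. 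Functoriality of $Z_{R,S}$ is automatic, as the domain has no nonidentity morphisms. The unit $2$-cell $Z_{\id_R}\colon \id_{Z(R)}\Rightarrow Z(\id_R)$ is the identity map of the bimodule $Z(R)=Z(\id_R)$ (both carry the standard $(Z(R),Z(R))$-action), and the compositor $Z_{g,f}\colon Z(g)\otimes_{Z(S)}Z(f)\Rightarrow Z(g\circ f)$ is exactly the bimodule map $\mu$ of Lemma \ref{tensorcomp}. Naturality of $Z_{g,f}$ and $Z_{\id_R}$ in their arguments is again vacuous, since the relevant source hom-categories are discrete.

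The real content lies in the coherence diagrams of Definition \ref{def:lax}, and the key simplification I would invoke is that composition in $\mathsf{Ring}$ is strictly associative and strictly unital. Consequently the $\mathsf{Ring}$-side cells $\alpha_\cA$, $\ell_\cA$, $r_\cA$ occurring along the bottom of those diagrams are identities, and so are their images under $Z$. Each coherence square therefore degenerates to an equality of bimodule homomorphisms, which I would verify on simple tensors using the explicit formula $\mu(t\otimes s)=t\,g(s)$ from Lemma \ref{tensorcomp}.

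For lax associativity, given $f\colon Q\to R$, $g\colon R\to S$, $h\colon S\to T$, I would trace an element $(t\otimes u)\otimes r$ around both branches: the left branch computes $\mu_{h\circ g,f}(th(u)\otimes r)$ while the right branch computes $\mu_{h,g\circ f}(t\otimes u\,g(r))$, and both land on $t\,h(u)\,h(g(r))\in Z(h\circ g\circ f)$ once the ring homomorphism $h$ is applied, so the square commutes. For lax unity, the left square identifies the canonical isomorphism $Z(S)\otimes_{Z(S)}Z(f)\cong Z(f)$, given by $z\otimes s\mapsto zs$, with $\mu_{\id_S,f}$; the right square similarly uses that the right $Z(R)$-action on $Z(f)$ is $s\cdot r=s\,f(r)$, whence $Z(f)\otimes_{Z(R)}Z(R)\cong Z(f)$ agrees with $\mu_{f,\id_R}$.

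I expect the main obstacle to be bookkeeping rather than genuine mathematics: one must confirm that the bimodule structures on the two sides of each compositor truly match—that $\mu$ is balanced over $Z(S)$ and intertwines the outer $(Z(T),Z(Q))$-actions—so that $\mu$ descends to the tensor product and each diagram lives in the correct hom-category. Once the bimodule structures from the first two Lemmas are lined up, the associativity and unity verifications are short element-level computations, rendered routine by the strictness of $\mathsf{Ring}$.
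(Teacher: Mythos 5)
Your proposal is correct and follows essentially the same route as the paper: the same object/morphism assignments, the identity unit 2-cell, the compositor from Lemma \ref{tensorcomp}, vacuous naturality from discreteness of $\mathsf{Ring}$'s hom-categories, and the same element-level trace through the lax associativity square. The only difference is that you spell out the lax unity checks in slightly more detail than the paper, which simply asserts they hold.
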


\begin{proof}

For $Z$ defined on on objects by sending a ring $R$ to it's center $Z(R)$. For some $R,S \in \mathsf{Ring}$ for some $f \colon R \to S$ we have the $S$-$R$ bimodule 
\[
Z(f)=\{ s\in S| f(r)s=sf(r) \mbox{ for any } r \in R\},
\] 
so  $Z(f)\in \mathsf{Morita}(Z(R),Z(S))$. As $Z(\id_R) = Z(R)$ as a $Z(R)$-$Z(R)$ bimodule, we define the 2-cell $Z_{\id_R}$ to be the identity, and the (lax) unity axioms hold.

For some $g:S\to T$ Lemma \ref{tensorcomp} shows that we have a well-defined ``compositor" $\mu \colon Z(g) \otimes_{Z(S)} Z(f) \to Z(g \circ f)$. That the map $\mu$ is natural in $g$ and $f$ is trivial in this case as $\mathsf{Ring}$ is an ordinary category. 

Finally, we consider lax associativity. Let $f \colon R \to S$, $g \colon S \to T$, and $h \colon T \to W$ be ring homomorphisms.
Let $(w \otimes t ) \otimes s \in (Z(h) \otimes Z(g) ) \otimes Z(f)$. Tracing this element down the left hand side of the lax associativity diagram in Definition \ref{def:lax} we obtain the element $w h(t) (h \circ g) (s)$, where juxtaposition is the product in the ring $W$. Unwinding and using the fact that $h$ is a ring homomorphism we have that $w h(t) (h \circ g) (s) = w h( t g(s))$ in $W$. The latter expression is obtained by going down the right side of the lax associativity diagram, hence the diagram commutes.
\end{proof}

\begin{example}[Remark 4.13 of \cite{DKR1}]
One might naively expect the map $\mu$ to be an isomorphism, but this is not typically the case. Let $R$ and $T$ be copies of $\RR \oplus \RR$, and $S$ be the ring of $2 \times 2$ upper triangular matrices with entries in $\RR$. Let $f \colon R \to S$ be the inclusion of the diagonal and $g \colon S \to T$ projection onto the diagonal. Then we compute,
\[
Z(R)\cong Z(T) \cong Z(g) \cong Z (g \circ f) \cong \RR \oplus \RR .
\]
Moreover, $Z(S) \cong \RR$, the ring of scalar matrices, and $Z(f) \cong \RR \oplus \RR$, the ring of diagonal matrices. Therefore,
\[
Z(g) \otimes_{Z(S)} Z(f) \cong \left (\RR \oplus \RR \right ) \otimes_\RR \left (\RR \oplus \RR \right ) \cong \RR^{\oplus 4} \not \cong \RR \oplus \RR .
\]

\end{example}

\section{Functoriality II: Cospans}\label{func2}

We now explain another approach to functoriality using cospans. We will see shortly how the construction of the previous section is related to cospans. Cospans are a diagrammatic construction are often easier to interpret in higher categorical settings than purely algebraic notions. For instance, they are commonly used in topological quantum field theory, see \cite{DKR2}.

\begin{definition}
Let $\cC$ be a category which admits small colimits. A \emph{cospan} in $\cC$ is a diagram of the following form.
\begin{center}
\begin{tikzcd}
X \arrow[rd, "f_X"']&  & Y\arrow[ld,"f_Y"]\\
 & W &  
\end{tikzcd}
\end{center}
A \emph{morphism of cospans} is a triple of morphisms $ \phi \colon X \to X'$, $\psi \colon Y \to Y'$, and $\delta \colon W \to W'$ in $\cC$ making the obvious diagram commute.
\end{definition}

A key example for us will be the following. Let $B$ be a $S$-$R$ bimodule which is itself is a ring in a compatible way, i.e., $B$ is a unital $R$-algebra and a unital $S$-algebra and these algebra structures assemble to a $S$-$R$ bimodule structure. Then we have a cospan which records the orbits of the multiplicative unit $1_B \in B$:

\begin{center}
	\begin{tikzcd}
		
		R  \arrow[rd, "\alpha_R"']   &                   &   S\arrow[ld,"\alpha_S"]   \\
		                                                 &       B      &    
		
	\end{tikzcd}
\end{center}
where $\alpha_R (r) = 1_B \cdot r$ and $\alpha_S (s) = s \cdot 1_B$.

\begin{definition}
Let $\cC$ be a category which admits small colimits. For each pair of morphisms $\varphi_X \colon Z \to X$ and $\varphi_Y \colon Z \to Y$ in $\cC$, choose an explicit pushout datum $(P, \iota_X \colon X \to P, \iota_Y \colon Y \to P)$. The \emph{bicategory of cospans in} $\cC$, $\mathsf{coSpan} (\cC)$, has as objects the same objects as $\cC$. For objects $X$ and $Y$, let $\mathsf{coSpan} (\cC) (X,Y)$ be the category of cospans and cospan morphisms with domains $X$ and $Y$, i.e., an object in $\mathsf{coSpan} (\cC) (X,Y)$ is an object $W \in \cC$ and two morphisms $f_X \colon X \to W$ and $f_Y \colon Y \to W$ in $\cC$. The composition bifunctor is given by pushout, i.e., if two cospans glue along a common object then their composition is the (already chosen) pushout along the interior morphisms
\begin{center}
\begin{tikzcd}
X \arrow[rd,"f_X"']& & Y\arrow[ld,"f_Y"']  \arrow[rd, "g_Y"]&  & Z\arrow[ld,"g_Z"]  \\
&  W \arrow[rd, "\iota_W"']&& V\arrow[ld,"\iota_V"] &   \\
 & & P & & 
\end{tikzcd}
\end{center}
now considered as a cospan between $X$ and $Z$. The associator in $\mathsf{coSpan} (\cC)$ is nontrivial as pushouts are only unique up to canonical isomorphism.
\end{definition}

With our definitions recalled we can describe the center as a lax functor, but first we need to verify that it takes values $\mathsf{coSpan} (\mathsf{Ring})$. We will present pushouts in $\mathsf{Ring}$ by amalgamated free products (as $\ZZ$-algebras).
Note that $Z(f)$---for $f \colon R \to S$ a homomorphism---has a ring structure inherited from $S$ (this is a simple application of the subring criteria).

\begin{lemma}
Let $f \colon R \to S$ be a ring homomorphism. The ring structure on $Z(f)$ is compatible with the $Z(S)$-$Z(R)$ bimodule structure.
\end{lemma}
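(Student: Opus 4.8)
The plan is to exhibit the two algebra structures on $Z(f)$ demanded by the notion of compatibility from the key example preceding the statement, and then to check that they assemble to exactly the $Z(S)$-$Z(R)$-bimodule structure constructed above. Recall at the outset that $Z(f)$ is a subring of $S$ containing $1_S$, so that $1_{Z(f)} = 1_S$ and all products taken ``in $Z(f)$'' are just products in $S$.

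First I would produce the two structure maps. The left $Z(S)$-algebra structure comes from the observation that $Z(S) \subseteq Z(f)$: any element commuting with all of $S$ commutes a fortiori with the subset $f(R)$, so the inclusion $\alpha_{Z(S)} \colon Z(S) \hookrightarrow Z(f)$ is a well-defined unital ring homomorphism. Dually, the right $Z(R)$-algebra structure comes from restricting $f$: if $r \in Z(R)$ then $f(r)$ commutes with every $f(r')$, hence $f(r) \in Z(f)$, so $\alpha_{Z(R)} := f|_{Z(R)} \colon Z(R) \to Z(f)$ is a well-defined unital ring homomorphism. (These are precisely the maps $\alpha_S$ and $\alpha_R$ of the key example, specialized to this setting.)

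Next I would match these maps to the given bimodule actions. Following the key example, the compatibility to be verified is that $\alpha_{Z(S)}(z) = z \cdot 1_{Z(f)}$ and $\alpha_{Z(R)}(r) = 1_{Z(f)} \cdot r$, and that the actions are recovered from multiplication in $Z(f)$ via $z \cdot s' = \alpha_{Z(S)}(z)\, s'$ and $s' \cdot r = s'\, \alpha_{Z(R)}(r)$. Because the ring structure on $Z(f)$ is inherited from $S$, both identities reduce to the defining formulas $z \cdot s' = z s'$ and $s' \cdot r = s' f(r)$ of the bimodule structure established above; evaluating the left action at $1_S$ returns $z$ and the right action at $1_S$ returns $f(r)$, so the unit formulas hold as well.

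Finally, that the left and right actions assemble into a genuine bimodule amounts to their commuting, which is just the associativity $(z s') f(r) = z (s' f(r))$ inside $S$. I expect the only points requiring genuine (if brief) verification to be the two containments $Z(S) \subseteq Z(f)$ and $f(Z(R)) \subseteq Z(f)$; everything after that is bookkeeping forced by the ring structure on $Z(f)$ being the restriction of the one on $S$.
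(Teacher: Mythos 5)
Your proof is correct and follows essentially the same route as the paper's: both rest on the observations that $Z(S)\subseteq Z(f)$ and $f(Z(R))\subseteq Z(f)$, with the bimodule compatibility reducing to associativity in $S$ together with the fact that elements of $Z(f)$ commute with the image of $f$. You are somewhat more explicit in naming the structure maps $\alpha_{Z(S)}$ and $\alpha_{Z(R)}$ and matching them to the cospan of the key example, which is a reasonable expansion of the paper's terser verification.
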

\begin{proof}
As $Z(S)$ is a subring of $S$, it's clear that $Z(f)$ is a $Z(S)$-algebra. Now let $r \in Z(R)$ and $s, s' \in Z(f)$, then
\[
 s(s'\cdot r) = s(s' f(r)) = s(f(r) s') = (s f(r)) s' = (s \cdot r)s',
 \]
as $s'$ commutes with the image of $f$.
\end{proof}

The following has appeared before, specifically as Theorem 4.12 of \cite{DKR1}. We provide a streamlined proof.

\begin{theorem}
The center defines a lax functor $\cZ \colon \mathsf{Ring} \to \mathsf{coSpan} (\mathsf{Ring})$, such that a ring homomorphism $f \colon R \to S$ maps to the cospan
\begin{center} $\cZ(f)  :=$
\begin{tikzcd} Z(R)  \arrow[rd, "\alpha_R"']   &                   &   Z(S) \arrow[ld,"\alpha_S"]   \\
		              &       Z(f)      &    		
\end{tikzcd}.
\end{center}
\end{theorem}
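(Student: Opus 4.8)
The plan is to assemble the data required by Definition \ref{def:lax} and then verify the two families of coherence diagrams. Since $\mathsf{Ring}$ is an ordinary category promoted to a bicategory, each hom-category $\mathsf{Ring}(R,S)$ is discrete; consequently the functors $\cZ_{R,S}$ amount to bare assignments on morphisms, every naturality requirement is vacuous, and the source associator $\alpha_{\cA}$ appearing in the lax associativity diagram is an identity. On objects I set $\cZ(R) = Z(R)$. On a morphism $f \colon R \to S$ I take $\cZ(f)$ to be the displayed cospan, where $\alpha_S \colon Z(S) \to Z(f)$ is the inclusion (every element central in $S$ commutes with $f(R)$, so $Z(S) \subseteq Z(f)$) and $\alpha_R \colon Z(R) \to Z(f)$ is $r \mapsto f(r)$ (for $r$ central in $R$, the element $f(r)$ commutes with all of $f(R)$). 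Both are ring homomorphisms, so $\cZ(f)$ is a genuine $1$-morphism of $\mathsf{coSpan}(\mathsf{Ring})$, matching the ``key example'' cospan with $B = Z(f)$.

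Next I would supply the unit and composition $2$-cells. Because the centralizer of $\id_R$ is $Z(R)$ with both legs the identity, the cospan $\cZ(\id_R)$ is literally the identity $1$-morphism on $Z(R)$, so I take $\cZ_{\id_R}$ to be the identity $2$-cell. For composable $f \colon R \to S$ and $g \colon S \to T$, the composite $\cZ(g) \circ \cZ(f)$ has apex the pushout $P = Z(f) \amalg_{Z(S)} Z(g)$, glued along the maps $\alpha_S \colon Z(S) \hookrightarrow Z(f)$ and $\beta_S \colon Z(S) \to Z(g)$, $s \mapsto g(s)$. I define the compositor $\cZ_{g,f} \colon P \to Z(g \circ f)$ by the universal property of the pushout, from the ring homomorphism $Z(f) \to Z(g\circ f)$, $s \mapsto g(s)$ (well defined since $g(s)$ commutes with $g(f(R))$ when $s \in Z(f)$, which is exactly the computation in Lemma \ref{tensorcomp} with the left factor set to $1$) together with the inclusion $Z(g) \hookrightarrow Z(g\circ f)$ (valid since $Z(g)$ commutes with all of $g(S) \supseteq g(f(R))$). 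These two maps agree on $Z(S)$, both sending $s$ to $g(s)$, so $\cZ_{g,f}$ exists and is unique; a direct check on the outer legs ($r \mapsto g(f(r))$ on the $Z(R)$ side and the identity on the $Z(T)$ side) shows it is a morphism of cospans from $\cZ(g)\circ\cZ(f)$ to $\cZ(g\circ f)$.

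The lax unity axioms then follow immediately from the fact that $\cZ(\id)$ is the identity cospan and $\cZ_{\id}$ is the identity $2$-cell: each unity square collapses to the coherence between the unitors of $\mathsf{coSpan}(\mathsf{Ring})$ and the canonical comparison of $P$ with the apex obtained by gluing against an identity cospan. The substantive step, and the one I expect to be the main obstacle, is lax associativity for a triple $f \colon R \to S$, $g \colon S \to T$, $h \colon T \to U$. Here the two apexes $(Z(f)\amalg_{Z(S)}Z(g))\amalg_{Z(T)}Z(h)$ and $Z(f)\amalg_{Z(S)}(Z(g)\amalg_{Z(T)}Z(h))$ are identified only up to the nontrivial associator $\alpha_{\cB}$ of $\mathsf{coSpan}(\mathsf{Ring})$, so the bookkeeping must be handled with care. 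My strategy is to reduce the whole diagram to a uniqueness statement: both composite $2$-cells are ring homomorphisms out of an iterated pushout into $Z(h\circ g\circ f) \subseteq U$, and both restrict on the three constituent rings to the \emph{fixed} homomorphisms $Z(f) \to U$, $s \mapsto h(g(s))$; $Z(g) \to U$, $t \mapsto h(t)$; and $Z(h) \hookrightarrow U$. Since these restrictions agree on the amalgamating subrings $Z(S)$ and $Z(T)$, the universal property of the iterated pushout forces the two composites to coincide after transport along $\alpha_{\cB}$; as $\alpha_{\cA}$ is trivial, the lax associativity diagram commutes, completing the proof.
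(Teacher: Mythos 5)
Your proposal is correct, and while it assembles the same data as the paper (identity unit $2$-cell, the same cospan on morphisms, a compositor into $Z(g\circ f)$ with identity outer legs), it diverges in two places worth noting. First, you build the compositor directly from the universal property of the pushout using the explicit ring maps $Z(f)\to Z(g\circ f)$, $s\mapsto g(s)$, and the inclusion $Z(g)\hookrightarrow Z(g\circ f)$; the paper instead routes through the bimodule map $\mu\colon Z(g)\otimes_{Z(S)}Z(f)\to Z(g\circ f)$ of Lemma \ref{tensorcomp}, defining $\varphi_f(s)=1_T\otimes s$ and $\varphi_g(t)=t\otimes 1_S$ and composing with $\mu$. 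The two constructions produce the same homomorphism (since $\mu(1_T\otimes s)=g(s)$ and $\mu(t\otimes 1_S)=t$), but yours makes the link to the Morita compositor implicit rather than explicit, which is a small expository loss and a small gain in directness. Second, and more substantively, your lax associativity argument is genuinely different: the paper traces a reduced word $\delta^1\dotsb\delta^k$ of the iterated amalgamated free product down both sides of the diagram and checks the two resulting elements of $W$ agree because $h$ is a ring homomorphism, whereas you reduce the whole square to the uniqueness clause of the universal property of the iterated pushout, checking only that both composites restrict to the same three homomorphisms on $Z(f)$, $Z(g)$, $Z(h)$ and agree on the amalgamating subrings. Your version is cleaner and sidesteps any reasoning about normal forms in amalgamated free products; the paper's is more concrete but relies on an informal manipulation of reduced words. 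Both are valid proofs of the theorem.
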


\begin{proof}
As $\cZ(\id_R) = \id_{Z(R)}$, the 2-cell $Z_{\id_R}$ is again the identity.
Next, we define a compositor.  Let $f \colon R \to S$ and $g \colon S \to T$ be ring homomorphisms, we need to compare $\cZ (g) \circ \cZ (f)$ and $\cZ (g \circ f)$, i.e., the cospans
\begin{center}
\begin{tikzcd}[column sep=tiny] Z(R)  \arrow[rd, "\alpha_R"']   & &   Z(S) \arrow[ld,"\alpha_S"'] \arrow[rd,"\beta_S"] && Z(T) \arrow[ld,"\beta_T"]  \\
		              &       Z(f)   \arrow[rd,"\iota_f"']   &    		&Z(g) \arrow[ld,"\iota_g"]\\
		              &&  {\displaystyle Z(g) \coprod_{Z(S)} Z(f)}
\end{tikzcd}
\hspace{0.5in}
\begin{tikzcd}[column sep=tiny] Z(R)  \arrow[rdd, "\gamma_R"']   &    &   Z(T) \arrow[ldd,"\gamma_S"]   \\ \mbox{} \\
		              &       Z(g \circ f)      &    		
\end{tikzcd}.
\end{center}
We need to construct a map of cospans which is natural in $f$ and $g$. We will take the identity morphisms on $Z(R)$ and $Z(T)$, so we need to describe a (natural) ring homomorphism 
\[\varphi \colon Z(g) \coprod_{Z(S)} Z(f) \to Z(g \circ f).
\]
From before, we have the compositor  $\mu \colon  Z(g) \otimes_{Z(S)} Z(f) \to Z(g \circ f)$, so by the universal property of the pushout, it is sufficient to give ring homomorphisms
\[
\varphi_f \colon Z(f) \to Z(g) \otimes_{Z(S)} Z(f) \quad \text{ and } \varphi_g \colon Z(g) \to Z(g) \otimes_{Z(S)} Z(f)
\]
such that for $s \in Z(S)$, $\varphi_f (\alpha_S (s)) = \varphi_g (\beta_S (s))$ and then consider the composition with the map $\mu$. We define
\[
\varphi_f (s) = 1_T \otimes s \quad \text{ and } \quad \varphi_g (t) = t \otimes 1_S.
\]
That the tensor product is over $Z(S)$ exactly implies agreement of $\varphi_f$ and $\varphi_g$ on elements coming from $Z(S)$.

Finally, to verify lax associativity, we can proceed as we did in the proof of Theorem \ref{thm:morita}. That is, consider a reduced word 
\[
\delta^1 \delta^2 \dotsb \delta^k \in \left ( Z(h) \coprod_{Z(T)} Z(g) \right ) \coprod_{Z(S)} Z(f).
\]
Tracing this element down the left side of the lax associativity diagram results in an element of $W$ obtained by (1) applying the map $h$ to any element of $Z(g)$, (2) applying $h \circ g$ to any element of $Z(f)$, and (3) multiplying in $W$. Evaluating our word $\delta^1 \delta^2 \dotsb \delta^k$ along the other direction of the diagram results in an element of the ring $W$ obtained by (1) applying the map $g$ to any element of $Z(f)$, (2) reducing the resulting word (elements in $Z(g \circ f) \subseteq T$ could now be adjacent), (3) applying $h$ to elements of $T$, and (4) multiplying in $W$. These two procedures produce the same element of $W$  as the map $h$ is a ring homomorphism.
 \end{proof}

%


\section{Centralizers in Topological Quantum Field Theory}\label{tft}

In this last section, we sketch how centers and centralizers arise in topological (quantum) field theory (TFT).  TFTs arise from/model physical systems that are time/scale invariant. Such theories arise at  low energies and are relevant to solid state/condensed matter physics. The recent lectures of Dan Freed, \cite{freed}, are an excellent mathematical introduction to TFTs and the classification of such theories. TFTs are somewhat simpler than the conformal field theories (CFTs) mentioned in the introduction.

We won't give a full definition of topological field theories, but rather focus on the \emph{algebra of observables} of a TFT. Given a physical system (classical or quantum), the quantities one could measure by experiment, the observables, have the mathematical structure of an algebra. Let us restrict to the case of associative $\RR$-algebras, so a vector space over $\RR$ equipped with a compatible ring structure. 

Now let $R$ be an $\RR$-algebra and consider a one dimensional TFT defined on the spacetime $\RR$ whose algebra of observables is $R$. The space $\RR$ occurs as the boundary of the upper half plane $\cH$. One could ask which two dimensional TFTs can be defined on $\cH$ such that they restrict to the given TFT on $\RR$. At the level of observables, the algebra of \emph{boundary observables}, $R$, will be a module over the algebra of \emph{bulk observables}. In turns out there is a universal bulk theory, given by adapting the centralizer construction to TFTs, through which any other bulk theory factors. On observables, the statement is that if $B$ is an algebra of bulk observables, then there is a unique map $B \to Z(R)$ which determines the module structure of $R$ over $B$.

The precise mathematical explanation for the preceding paragraph has quite a rich history going back to a conjecture of Deligne and subsequent work of Kontsevich. The statement on the level of observables is proved in \cite{JT}. One of the key mathematical actors in this play is Hochschild cohomology. The relationship between centers and Hochschild cohomology is described in Section 5.3 of \cite{Lurie}.

\begin{figure}
\begin{tikzpicture}
        \draw[line width=0.4mm, blue] (-2,0)--(2,0);
        \fill [pattern = north east lines, pattern color = black] (-2,0) rectangle (2,2);
        \node at (0,-0.35) {$R$};
        \fill [white] (-0.5,.8) rectangle (0.5, 1.3);
        \node at (0, 1.05) {$Z(R)$};
        \draw[line width=0.4mm, blue] (4,0)--(4,2);
        \draw[line width=0.4mm, red] (4,0)--(8,0);
         \fill [pattern = north east lines, pattern color = black] (4,0) rectangle (8,2);
         \draw[violet,fill=violet] (4,0) circle (.5ex);
         \fill [white] (5.55,.8) rectangle (6.45, 1.3);
         \node at (6,1.05) {$Z(f)$};
         \node at (3.65, 1.05) {$R$};
         \node at (6,-0.35) {$S$};
         \node at (3.65, -0.35) {$S$};
        \end{tikzpicture} 
        \caption{Centers and centralizers as universal algebras of bulk observables associated to boundary topological (quantum) field theories.}
\end{figure}
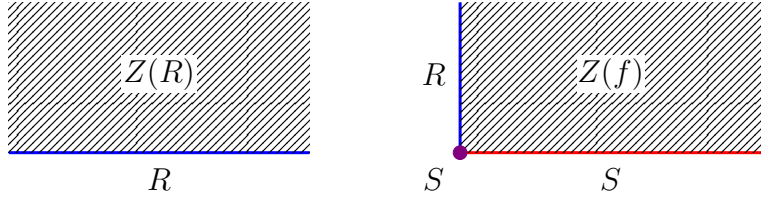

Now, what about the centralizer of a homomorphism $f \colon R \to S$? Consider the boundary of the first quadrant, $\partial \cQ \subset \cQ$, where
\[
\partial \cQ = \{(x,y) \in \RR^2 : x \ge 0 \text{ and } y=0, \text{ or } x=0 \text{ and } y \ge 0\}.
\]
Suppose there is a TFT on $\partial \cQ$ such that the ``vertical" algebra of observables is $R$, the ``horizontal" observables are $S$, and the observables at the origin are given by the $R$-$S$ bimodule $S$ (see the figure above). It turns out that there is a universal bulk theory in this situation as well. At the level of observables, the algebra of bulk observables (on $\cQ$) of the universal theory is given by $Z(f)$.

As the dimension of spacetime increases, the algebras of observables have higher structure beyond that of an associative algebra. The work of Lurie,  \cite{Lurie}, applies to these settings as well. There remain many open questions about the mathematics of TFTs in higher dimensions (see Freed's lectures \cite{freed}).

\vspace{4ex}

\bibliographystyle{plain}
\bibliography{centralizer}

\end{document}